\theoremstyle{definition}
\newtheorem{convention}{Convention}
\newtheorem{rem}{Remark}
\newtheorem{ex}{Example}
\theoremstyle{plain}
\newtheorem{prop}{Proposition}
\newtheorem{cor}{Corollary}
\newcommand{\RR}{\mathbb{R}}
\def\states{\mathcal X}
\def\cset{\mathcal M}
\def\csete{\mathcal C}
\def\allgambles{\mathcal L}
\def\lpr{\underline P}
\def\extp{\mathcal E}
\def\face{\mathcal M}
\newcommand{\low}[1]{{\underline{#1}}}
\title{Errors bounds for finite approximations of coherent lower previsions on finite probability spaces}
\author[1]{Damjan \v{S}kulj \\ University of Ljubljana, Faculty of Social Sciences \\ Kardeljeva pl. 5, SI-1000 Ljubljana, Slovenia \\ \href{mailto:damjan.skulj@fdv.uni-lj.si}{\tt damjan.skulj@fdv.uni-lj.si} }
\begin{document}
	
	\maketitle

	\begin{abstract}
Coherent lower previsions are general probabilistic models allowing incompletely specified probability distributions. 
However, for complete description of a coherent lower prevision -- even on finite underlying sample spaces -- an infinite number of assessments is needed in general. 
Therefore, they are often only described approximately by some less general models, such as coherent lower probabilities or in terms of some other finite set of constraints. 
The magnitude of error induced by the approximations has often been neglected in the literature, despite the fact that it can be significant, with substantial impact on consequent decisions.
An apparent reason is that no widely used general method for estimating the error seems to be available at the moment. 

This paper provides a practically applicable method that allows calculating an upper bound for the maximal error induced by approximating a coherent lower probability with its values on a finite set of gambles. 
An algorithm is also provided with an estimation of its computational complexity.

		\smallskip\noindent
		{\bfseries Keywords.} lower prevision, partially specified lower prevision, credal set, convex polyhedron, quadratic programming
		
		\smallskip\noindent
		2010 Mathematics Subject Classification: 90C90	
	\end{abstract}
	
\section{Introduction}
Models of \emph{imprecise probabilities} have been developed to cope with uncertainty in probability distributions. 
Single precise models are thus replaced by models compatible with multiple (precise) probability distributions. 
The advantage of such models compared to the classical precise models is that they can incorporate a lower or higher degree of uncertainty, which is represented through larger or smaller sets of compatible probability distributions.
Thus imprecise models subsume precise models as one extreme as well as the models of complete uncertainty on the other.
A review of models and applications of imprecise probabilities can be found in \cite{augustin2014introduction}. 

One of the most popular and also most general models of imprecise probabilities are \emph{coherent lower previsions} (see e.g. \cite{miranda2008, TroffaesDeCooman2014}). 
A coherent lower prevision $\lpr$, in general given on a measurable space $(\states, \mathcal A)$, is an imprecise probability model based on judgements about the lower or upper expectations of a set of random variables $\mathcal K$, also called \emph{gambles}. 
The judgement $\lpr(f) = a$ implies that every precise probability distribution $P$ compatible with $\lpr$ must satisfy $E_P(f) \ge a$, that is $\lpr(f)$ means that the expectation of $f$ is at least $a$. 
\emph{Coherence} in this context means that the judgements on the set of gambles allow, for every gamble $f$, the existence of at least one precise probability distribution $P$ compatible with $\lpr$ for which $E_P(f)=\lpr(f)$. 

A coherent lower prevision $\lpr$ specified on a set of gambles $\mathcal K$ can have multiple possible extensions to a larger set, say $\mathcal H\supset \mathcal K$.
In other words, there can be multiple coherent lower previsions that coincide on a set of gambles.
In particular, a coherent lower prevision may be approximated by a more specific model, such as \emph{coherent lower probability} (see e.g.\cite{antonucci10a}), in which case its restriction to \emph{indicator gambles} is only known, i.e. an \emph{indicator gamble} $1_A$ is a map $\states \to \RR$ such that $1_A(x)$ equals 1 if $x\in A$ and 0 otherwise. 
There are variety of reasons for approximating coherent lower probabilities with less general methods. 
One reason is that in general there is no nice or elegant way to represent a general coherent lower prevision. 
Lower and upper probabilities can be much more elegant and intuitive as approximations. 
We must also keep in mind that coherent lower previsions, even on finite spaces, in general cannot be represented in terms of a single function or any other reasonably sized collection of values. 
In best case they can be represented as sets of extreme points of their credal sets, which in most cases are very large. 
Instead of calculating all extreme points it is sometimes more convenient to approximate a coherent lower prevision with its values on a suitable set of gambles and apply the \emph{natural extension} for further calculations. 
In many cases, computations with coherent lower previsions are computationally intensive (consider for instance calculating lower prevision corresponding to imprecise Markov chains \cite{decooman-2008-a, skulj:09IJAR, skulj:15AMC}), which makes it reasonable to keep the set $\mathcal K$ of moderate size. 
 
In this paper we analyze the following problem. 
Let $\lpr$ be a lower prevision on a finite sample space $\states$. 
Its full description would in general require detailed information of its credal set, whose set of extreme points can be very large. 
Suppose that instead we know the values of $\lpr$ on a set of gambles $\mathcal K$. 
The restriction $\lpr_{\mathcal K}$ approximates $\lpr$ and the natural question arises, how good is this approximation. 
Given the restriction, $\lpr$ is its extension, but there might be other extensions too. 
Therefore, we would like to know by how much can another extension deviate from $\lpr$. 
In other words, we want to find the maximal distance between two arbitrary extensions of a coherent lower prevision to the set of all gambles. 

In our analysis of the maximal possible distance we first show that the maximal possible distance is always reached when one of the extensions is the \emph{natural extension}. 
Consequently, much of the analysis is done on the credal set of the natural extension with the special emphasis on its extreme points. 
Our main theoretical result gives an upper bound for the maximal distance in terms of distances between the extreme points. 
We also provide an algorithm for finding the maximal possible distance and estimate its computational complexity.

The paper is structured as follows. In Section~\ref{s-nbr} we review basic concepts of imprecise probabilities with the emphasis on coherent lower previsions. In Section~\ref{s-cacs} we analyze basic properties of credal sets as convex polyhedra and apply some general concepts of convex analysis to the case of credal sets. Our main theoretical results are in Section~\ref{s-dbclp}. The algorithm for calculating the maximal possible distance is described in Section~\ref{s-a}. The paper concludes with Section~\ref{s-c}. 

\section{Notation and basic results}\label{s-nbr}
In this section we introduce the notation and review the concepts used in the paper. When possible we will stick with the standard terminology used in the theory of imprecise probabilities, which will sometimes be supplemented by the standard terminology of convex analysis, linear algebra and optimization. 

The object of our analysis will be \emph{coherent lower previsions} which are one of the most general models used to represent partially specified probabilities. They encompass several particular models, such as \emph{coherent lower and upper probabilities, 2- and n-monotone capacities, belief and plausibility functions, lower expectation functionals} and others. Mathematically, coherent lower previsions are superlinear functionals that can be equivalently represented as lower envelopes of expectation functionals. 

\subsubsection*{Gambles}
Throughout this paper let $\states$ represent a finite set, a \emph{sample space}, and $\allgambles$ the set of all real-valued maps on $\states$, also called \emph{gambles}. Equivalently, $\allgambles$ may be viewed as the set of vectors in $\RR^{|\states|}$. By $1_A$ we will denote the \emph{indicator gamble} of a set $A\subseteq \states$:
\begin{equation}
1_A(x) = 
\begin{cases}
1 & x \in A \\
0 & \text{otherwise}. 
\end{cases} 	
\end{equation}
We will write $1_x$ instead of $1_{\{x \} }$ for elements $x\in \states$. 

The set of gambles will be endowed by the standard inner product 
\begin{equation}\label{eq-inner-product}
	f\cdot g = \sum_{x\in \states} f(x)g(x),
\end{equation}
which generates the $l^2$ norm:
\begin{equation}
\| f\| = \sqrt{f\cdot f} = \sqrt{\sum_{x\in\states}f(x)^2},
\end{equation}
and the Euclidean distance between vectors:
\begin{equation}
d(f, g) = \| f-g \|,
\end{equation}
which will be used by default throughout the paper. 

\subsubsection*{Linear previsions} A \emph{linear prevision} $P$ is an expectation functional with respect to some probability mass vector $p$ on $\states$. It maps a gamble $f$ into a real number $P(f)$.
Usually, we will write 
\begin{equation}
P(f) = \sum_{x\in\states} p(x)f(x) =: P\cdot f.
\end{equation} 
The set of linear previsions is therefore a subset of the dual space of $\allgambles$. 

The inner product notation used on the right hand side of the above equation is introduced because we will often use linear functionals of the form $
f\mapsto p\cdot f
$
where the vector $p$ will not necessarily be a probability mass vector. We will then use the inner product notation to avoid misinterpretations. Without danger of confusion we will therefore interpret a linear prevision $P$ as a vector with the same length as gambles in $\allgambles$. 

\subsubsection*{Probability simplex} If the sample space $\states$ contains exactly three elements, say $\states = \{x, y, z \}$, the probability mass vectors can be represented as points of the form $(p(x), p(y), p(z))$ in $\RR^3$. However, since the restriction $p(x)+p(y)+p(z)=1$ applies, they in fact form a two dimensional space, which can be depicted as an equilateral triangle with vertices $x, y$ and $z$. Given any point in this triangle, the sum of distances to its sides is constantly equal to its altitude, which equals $\frac{\sqrt{3}}{2}a$, where $a$ is the common length of the sides. Taking $a=\frac{2}{\sqrt{3}}$ makes the altitude equal to 1. The distance of a point from each side now denotes the probability of the point in the opposite vertex (see Fig. \ref{fig-psimplex}).
%
%
Probability simplex diagrams are very useful to illustrate concepts of imprecise probabilities; however, one needs to be cautious not to be mislead by specifics of low dimensional probability spaces. 

\subsubsection*{Coherent lower previsions} 
A \emph{coherent lower prevision} on an arbitrary set of gambles $\mathcal K$ is a mapping $\lpr\colon \mathcal K\to \RR$ that allows the representation 
\begin{equation}
\lpr (f) = \min_{P\in\cset(\lpr)} P(f)
\end{equation}
for every $f\in \mathcal K$, where $\cset(\lpr)$ is a closed and convex set of linear previsions. The set $\cset(\lpr)$ is called a \emph{credal set} of $\low P$. We will often denote a credal set just by $\cset$. 
\begin{figure}
	\centering
	\begin{minipage}[t]{0.45\textwidth}
		\centering
		\begin{tikzpicture}[scale = 4]
		\coordinate (x) at (0.000,  0.000);
		\coordinate (z) at (0.577,  1.000);
		\coordinate (y) at (1.155,  0.000);
		
		\draw[gray] (x) -- (y) -- (z) --cycle;
		\node[below left] at (x) {$x$}; 
		\node[below right] at (y) {$y$};
		\node[above] at (z) {$z$};
		
		\coordinate (p) at (0.4330 ,   0.2500);
		
		\draw[thin, dashed] (p)--(0.4330,  0) node[midway, right] {\tiny $P(z)$}; 
		\draw[thin, dashed] (p)--(0.2165,   0.3750) node[midway, below left] {\tiny $P(y)$};
		\draw[thin, dashed] (p)--(0.8660,    0.5000) node[midway, above] {\tiny $P(x)$};
		
		\draw[fill=white] (p) circle (.3pt) node[color = black, right] {\tiny $P = (1/2, 1/4, 1/4)$};
		\end{tikzpicture}
		\caption{Probability simplex: the distance from a side denotes the probability of the element at the opposite vertex.}\label{fig-psimplex}
	\end{minipage}\hfill
	\begin{minipage}[t]{0.45\textwidth}
		\centering
		\begin{tikzpicture}[scale = 4]
		\coordinate (x) at (0.000,  0.000);
		\coordinate (z) at (0.577,  1.000);
		\coordinate (y) at (1.155,  0.000);
		
		\draw[gray] (x) -- (y) -- (z) --cycle;
		\node[below left] at (x) {$x$}; 
		\node[below right] at (y) {$y$};
		\node[above] at (z) {$z$};
		
		\coordinate (E1) at (0.531,  0.280);
		\coordinate (E2) at  (0.531,  0.227);
		\coordinate (E3) at  (0.592,  0.191);
		\coordinate (E4) at (0.670,  0.202);
		\coordinate (E5) at (0.702,  0.477);
		
		\draw[gray!40!white, fill = gray!30!white] (E1) -- (E2)--(E3)	--	(E4) --	(E5)  -- cycle; 
		\node at ($(E1)+(.07, 0)$) {\tiny $\cset$};
		\end{tikzpicture}	
		\caption{The credal set $\cset$ of the lower prevision from Example~\ref{ex-lpr1}.} \label{fig-credal-set}
	\end{minipage}
\end{figure}

\subsubsection*{The natural extension} Given a coherent lower prevision $\lpr$ on $\mathcal K$, it is possible to extend it to the set of all gambles $\allgambles$ in possibly several different ways. However, there is a unique minimal extension, called the \emph{natural extension}:
\begin{equation}
\low E(f) = \min_{P\in\cset(\lpr)} P(f). 
\end{equation} 
As the natural extension is the \emph{lower envelope} or the \emph{support function} of a credal set, containing expectation functionals, we may call a coherent lower prevision defined on the entire $\allgambles$ a \emph{lower expectation functional}. 

A mapping $\lpr \colon \mathcal K\to \RR$, where $\mathcal K$ is a linear (vector) space, is a coherent lower prevision if and only if it satisfies the following axioms (\cite{miranda2008}) for all $f, g\in \mathcal K$ and $\lambda \ge 0$: 
\begin{itemize}
	\item[(P1)] $\lpr (f) \ge \inf_{x\in \states} f(x)$ [accepting sure gains];
	\item[(P2)] $\lpr (\lambda f) = \lambda \lpr (f)$ [positive homogeneity];
	\item[(P3)] $\lpr (f + g) \ge \lpr(f) + \lpr(g)$ [superlinearity].
\end{itemize}
An easy consequence of the definitions is \emph{constant additivity}:
\begin{equation}
	\lpr (f + \lambda 1_\states) = \lpr(f) + \lambda
\end{equation}
for any $\lambda\in \RR$. 

\section{Convex analysis on credal sets}\label{s-cacs}
\subsection{Credal set as a closed convex set} 
A credal set is a closed and convex set of linear previsions. Since every linear prevision can be uniquely represented as a probability mass vector, a credal set can be represented as a convex set of probability mass vectors. The set $\cset$ is therefore the maximal set of $|\states|$-dimensional vectors $p$ satisfying 
\begin{align}
p\cdot f & \ge \lpr(f) & & \text{ for every } f\in \mathcal K, \label{eq-cset-constraints-first}\\
p\cdot 1_x & \ge 0 & & \text{ for every } x\in \states\label{eq-cset-constraints-nonnegativity} \text{ and } \\
p\cdot 1_\states & = 1. \label{eq-constraint-equality} 	
\end{align}	
In the case where $\mathcal K$ is finite, $\cset$ is bounded by a finite number of support hyperplanes, and 
%
is therefore a \emph{convex polyhedron}. Strictly speaking, it is an $\mathcal H$-polyhedron, which means that it is bounded and an intersection of a finite number of half spaces. According to Theorem~14.3 in \cite{gruber:07CDG} every $\mathcal H$-polyhedron in an $\RR^m$ is also a $\mathcal V$-polyhedron, which means that it is a convex combination of a finite number of extreme points. 

From now on we will call credal sets that are convex polyhedra \emph{finitely generated credal sets}. Similarly, we will denote coherent lower previsions defined on finite sets of gambles or their natural extensions \emph{finitely generated coherent lower previsions}.
\begin{ex}\label{ex-lpr1}
	Let $\lpr$ be a lower prevision on $\mathcal K = \{ f_1, \ldots, f_5 \}$ where 
	\begin{align*}
		f_1 & = (0,	1,	0.5) &	f_2 & = (0,	0.5, 1) \\
		f_3 & = (0.15,	0, 1) & 	f_4 & = (1,	0,	0.6) \\
		f_5 & = (0.2,	1,	0) \\
	\end{align*}
	and 
	\begin{align*}
		\lpr(f_1) & = 0.46 & \lpr(f_2) & = 0.4 & \lpr(f_3) & = 0.25 \\
		\lpr(f_4) & = 0.44 & \lpr(f_5) & = 0.4
	\end{align*}
	The credal set corresponding to $\lpr$ is depicted in Figure~\ref{fig-credal-set}.
\end{ex}
According to the above, it would be suitable to extend the domain of $\lpr$ with the gambles of the form $1_x$ for every $x\in\states$. Doing so, though, may result in a non-coherent lower prevision, because other constraints my already imply that $\lpr(1_x) \ge 0$, where the inequality may even be strict. Therefore we adopt the following convention: 
\begin{convention}\label{conv-include-1x}
	The domain $\mathcal K$ of all lower previsions used will contain all gambles of the form $1_x$ together with the value $\lpr(1_x)=0$, unless $\lpr(1_x)\ge 0$ is already implied by other values of $\lpr$ on $\mathcal K$. 
\end{convention}
Assuming the above convention, the credal set of a coherent lower prevision $\lpr$ is the set of vectors $p$ satisfying constraints \eqref{eq-cset-constraints-first} and \eqref{eq-constraint-equality}. 

\subsubsection*{Faces and extreme points of a finitely generated credal set}
The \emph{faces} of a credal set $\cset$ are the sets of the form
\begin{equation}
	\cset_f = \{ P\in\cset \colon P(f) = \low E(f)  \},
\end{equation}
where $f$ is an arbitrary gamble. 
The smallest faces are exactly the extreme points and the faces of codimension 1 are called \emph{facets}\footnote{The codimension 1 is meant relative to the dimension of $\cset$. That is $\dim{\face_f} = \dim \cset-1$. Note also that a credal set is at most of dimension $|\states|-1$ because of the constraint $P(1_\states) = 1$}. 
The set of all extreme points of $\cset$ will be denoted by $\extp(\cset)$ or simply $\extp$. The set of extreme points of a face $\cset_f$ will be denoted by $\extp_f$, and $\extp_f\subseteq \extp$ holds. 
\begin{ex}
	The extreme points of the credal set from Example~\ref{ex-lpr1} are 
	\begin{align*}
		E_1 & = (0.4,	0.32, 	0.28) & E_2 = (0.43,	0.35,	0.23) \\
		E_3 & = (0.39,	0.42,	0.19) & E_4 = (0.32,	0.48,	0.20)  \\
		E_5 & = (0.15,	0.37,	0.48	) 
	\end{align*}
	(See Figure~\ref{fig-normal-cones}.)
\end{ex}
\begin{figure}
	\centering
	\begin{minipage}[t]{0.45\textwidth}
		\centering
		\begin{tikzpicture}[scale = 4]
		\coordinate (x) at (0.000,  0.000);
		\coordinate (z) at (0.577,  1.000);
		\coordinate (y) at (1.155,  0.000);
		
		\draw[gray] (x) -- (y) -- (z) --cycle;
		\node[below left] at (x) {$x$}; 
		\node[below right] at (y) {$y$};
		\node[above] at (z) {$z$};
		
		\coordinate (E1) at (0.531,  0.280);
		\coordinate (E2) at  (0.531,  0.227);
		\coordinate (E3) at  (0.592,  0.191);
		\coordinate (E4) at (0.670,  0.202);
		\coordinate (E5) at (0.702,  0.477);
		
		\draw[gray!40!white, fill = gray!30!white] (E1) -- (E2)--(E3)	--	(E4) --	(E5)  -- cycle; 
		\node at ($(E1)+(.07, 0)$) {\tiny $\cset$};
		
		\draw[very thin, dashed, blue!30!white] ($(E1)!-.8!(E2)$) -- ($(E1)!1.8!(E2)$);
		\draw[very thin, dashed, blue!30!white] ($(E2)!-.8!(E3)$) -- ($(E2)!1.8!(E3)$);
		\draw[very thin, dashed, blue!30!white] ($(E3)!-.8!(E4)$) -- ($(E3)!1.8!(E4)$);
		\draw[very thin, dashed, blue!30!white] ($(E4)!-.3!(E5)$) -- ($(E4)!1.3!(E5)$);
		\draw[very thin, dashed, blue!30!white] ($(E5)!-.3!(E1)$) -- ($(E5)!1.3!(E1)$);
		
		\coordinate (f12) at (0.100,  -0.000);
		\coordinate (f23) at (0.050,  0.087);
		\coordinate (f34) at (-0.014,  0.099);
		\coordinate (f45) at (-0.099,  0.011);
		\coordinate (f51) at (0.076,  -0.065);

		\draw[->, very thin] ($(E1)!.5!(E2)$) --  ++($-1*(f12)$) node[above] {\tiny $f_1$};
		\draw[->, very thin] ($(E2)!.5!(E3)$) --  ++($-1*(f23)$) node[below] {\tiny $f_2$};
		\draw[->, very thin] ($(E3)!.5!(E4)$) --  ++($-1*(f34)$) node[below] {\tiny $f_3$};
		\draw[->, very thin] ($(E4)!.5!(E5)$) --  ++($-1*(f45)$) node[above] {\tiny $f_4$};
		\draw[->, very thin] ($(E5)!.5!(E1)$) --  ++($-1*(f51)$) node[above] {\tiny $f_5$};
		\end{tikzpicture}		
		\caption{Credal set from Example~\ref{ex-lpr1} as an intersection of half planes: their support lines are dashed, gambles $f_i\in \mathcal K$ are depicted as normal vectors to faces }\label{fig-credal-half-planes}
	\end{minipage}\hfill
	\begin{minipage}[t]{0.45\textwidth}
		\centering
		\begin{tikzpicture}[scale = 8]
		\draw[white] (.3, 0) -- (.5, 0);
		
		\coordinate (E1) at (0.531,  0.280);
		\coordinate (E2) at  (0.531,  0.227);
		\coordinate (E3) at  (0.592,  0.191);
		\coordinate (E4) at (0.670,  0.202);
		\coordinate (E5) at (0.702,  0.477);
		
		\draw[gray!40!white, fill = gray!30!white] (E1) -- (E2)--(E3)	--	(E4) --	(E5)  -- cycle; 
		\node at ($(E1)+(.09, 0.03)$) {\small $\cset$};

		\coordinate (f12) at (0.100,  -0.000);
		\coordinate (f23) at (0.050,  0.087);
		\coordinate (f34) at (-0.014,  0.099);
		\coordinate (f45) at (-0.099,  0.011);
		\coordinate (f51) at (0.076,  -0.065);

		\fill[left color=gray!15!white, right color=gray!35!white!80!red, draw=white] (E1) --  ++($-1*(f12)$) -- ($(E1) - (f51)$) -- cycle;
		\draw[very thin, ->] (E1) --  ++($-1.1*(f12)$);
		\draw[very thin, ->] (E1) --  ++($-1.1*(f51)$);
		\node at ($(E1)+(-.1, .02)$) {\tiny $N_\mathcal M(E_1)$};
		
		\fill[top color=gray!15!white, bottom color=gray!35!white!80!red, draw=white] (E5) --  ++($-1.1*(f51)$) -- ($(E5) - 1.1*(f45)$) -- cycle;
		\draw[very thin, ->] (E5) --  ++($-1.1*(f51)$);
		\draw[very thin, ->] (E5) --  ++($-1.1*(f45)$);
		\node at ($(E5)+(.05, .02)$) {\tiny $N_\mathcal M(E_5)$};
		
		\draw[fill = white] (E1) circle[radius=.1pt] node[right] {\tiny $E_1$};
		\draw[fill = white] (E2) circle[radius=.1pt] node[right] {\tiny $E_2$};
		\draw[fill = white] (E3) circle[radius=.1pt] node[below] {\tiny $E_3$};
		\draw[fill = white] (E4) circle[radius=.1pt] node[right] {\tiny $E_4$};
		\draw[fill = white] (E5) circle[radius=.1pt] node[below right] {\tiny $E_5$};
		
		\end{tikzpicture}
		\caption{Normal cones at extreme points are the positive hulls of the normal vectors of adjacent faces.} \label{fig-normal-cones}
	\end{minipage}
\end{figure}

We extend a credal set $\cset$ to the set of vectors
\begin{equation}\label{eq-m-hat}
	\hat\cset  = \{ p \colon p\cdot (f-\lpr(f)1_\states) \ge 0, \text{ for every } f\in\mathcal K \}, 
\end{equation}
which is a convex cone, with the \emph{basis} $\cset$. This means that every $p\in\hat{\cset}$ is of the form $p = \lambda P$ for some $\lambda\ge 0$ and $P\in\cset$. (Note that $p\ge 0$ follows from $\lpr(1_x) \ge 0$ for every $x\in\states$, which are assumed by Convention~\ref{conv-include-1x}.) 

Given a credal set $\cset$, the \emph{cone of desirable gambles} contains exactly those gambles in $\allgambles$ whose lower prevision is non-negative:
\begin{equation}\label{eq-cone-desirable-gambles}
\mathcal D = \{ f\in\allgambles \colon p\cdot f \ge 0 \text{ for every } p\in\cset  \}.
\end{equation}
The gambles $f$ with $\lpr(f)=0$ are sometimes called \emph{marginally desirable}. 

\subsection{Normal cones of credal sets}

Let
\begin{equation}\label{eq-nc-gruber}
\csete=\{ x\in\RR^n \colon Ax\le b\}, 
\end{equation}
where $A$ is an $m\times n$ matrix and $b\in \RR^m$ a vector, be a convex polyhedron and $x$ a point on its boundary. According to \cite{gruber:07CDG}, the \emph{normal cone} at $x$ is the set 
\begin{equation}
N_\csete (x) = \{ u\colon u\cdot y \le u\cdot x \text{ for all } y\in \csete\} = \{ u\colon u\cdot (y-x) \le 0 \text{ for all } y\in \csete\}. 
\end{equation}

In our case, let $\cset$ be a credal set defined by constraints of the form \eqref{eq-cset-constraints-first} and \eqref{eq-constraint-equality} and $E$ its boundary point. The normal cone of $\cset$ at $E$ is the set
\begin{equation}\label{eq-normal-cone}
N_{\cset}(E) = \{ f \colon E(f) \le P(f) \text{ for every }  P\in\cset \}.
\end{equation}
The normal cone is thus the set of gambles $f$ that satisfy $E(f) = \lpr(f)$.
\begin{prop}[\cite{gruber:07CDG}~Proposition~14.1.]\label{prop-gruber1.4}
	Let $\csete$ be a convex polyhedron defined as in \eqref{eq-nc-gruber} and $x$ its boundary point.
	Let $a_i\cdot x = b_i$ hold for exactly $i\in  I\subseteq \{1, \ldots, m\}$, where $a_i$ denotes $i$-th row of the matrix $A$.   
	Then 
	$ N_\csete (x) = \mathrm{posi}\,\{ a_i\colon i\in I \},  $
	where $\mathrm{posi}$ denotes the \emph{non-negative hull}.  
\end{prop}

\begin{rem}
	We will call the set of vectors $\{ a_i \colon i \in I\}$ the \emph{positive basis} of the normal cone $N_\csete(x)$. 
\end{rem}
\begin{cor}
	Let $\cset$ be a credal set defined by constraints \eqref{eq-cset-constraints-first} and \eqref{eq-constraint-equality}. Then the set of desirable gambles $\mathcal D$ corresponding to $\cset$ is the normal cone of $\hat{\cset}$ at $\mathbf{0}$ and we have that 
	\begin{equation}\label{eq-desirable-normal-cone}
		 \mathcal D = \mathrm{posi}\,\{f-\lpr(f)1_\states \colon f\in \mathcal K\}. 
	\end{equation}
\end{cor}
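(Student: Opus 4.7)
The plan is to apply Proposition~\ref{prop-gruber1.4} to the convex cone $\hat{\cset}$ at the origin. First, I would observe that $\hat{\cset}$ as defined by \eqref{eq-m-hat} is precisely a convex polyhedron of the form \eqref{eq-convex-polyhedron}, with the collection of constraint gambles $\mathcal K^+$ and the identically-zero lower prevision $\lpr \equiv 0$. The origin $\mathbf{0}$ lies in $\hat{\cset}$ and sits on its boundary, because every defining constraint $\mathbf{0}\cdot f = 0$ is satisfied with equality. Hence the index set $I$ appearing in Proposition~\ref{prop-gruber1.4} is the full index set $\{1,\ldots,n\}$, and the proposition yields at once $N_{\hat{\cset}}(\mathbf{0}) = \mathrm{pos}\{f : f\in\mathcal K^+\}$. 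Under the standing assumption $\mathcal K^+ = \mathcal K$, this matches the right-hand side of \eqref{eq-desirable-normal-cone}.

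It then remains to identify this normal cone with the cone of desirable gambles $\mathcal D$. Unwinding the definition of the normal cone, $f \in N_{\hat{\cset}}(\mathbf{0})$ amounts to saying that the linear functional $q\mapsto q\cdot f$ attains its extremum over $\hat{\cset}$ at the apex $\mathbf{0}$, which for a cone defined by the homogeneous constraints $q\cdot g \ge 0$ is equivalent to $q\cdot f \ge 0$ for every $q\in\hat{\cset}$. Because $\hat{\cset}$ is the conic hull $\{\lambda P : \lambda\ge 0,\, P\in\cset\}$ of its basis $\cset$, this is in turn equivalent to $P\cdot f \ge 0$ for every $P\in\cset$, which is precisely the defining condition \eqref{eq-cone-desirable-gambles} of $\mathcal D$. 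Chaining the two identifications yields $\mathcal D = N_{\hat{\cset}}(\mathbf{0}) = \mathrm{pos}\{f : f\in\mathcal K\}$, which is the claim.

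There is no substantive obstacle. The only care needed is the bookkeeping at the apex of the cone: one has to recognise $\mathbf{0}$ as the special point of $\hat{\cset}$ at which \emph{every} defining inequality is active, so that Proposition~\ref{prop-gruber1.4} produces the full positive hull of $\mathcal K^+$ rather than a proper sub-cone corresponding to only some of the tight constraints, together with the identification of $\hat{\cset}$ as the conic extension of $\cset$ used to translate the condition ``non-negative inner product with every element of $\hat{\cset}$'' into ``non-negative inner product with every linear prevision in $\cset$''.
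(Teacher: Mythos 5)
Your proof is correct and follows essentially the same route as the paper's: both recognise $\hat{\cset}$ as a polyhedron of the form \eqref{eq-convex-polyhedron} with every defining constraint active at $\mathbf{0}$, and then invoke Proposition~\ref{prop-gruber1.4} at that point. The only difference is that you make explicit the identification of $N_{\hat{\cset}}(\mathbf{0})$ with $\mathcal D$ via the conic-hull description of $\hat{\cset}$, which the paper leaves implicit.
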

\begin{proof}
	The set $\hat{\cset}$ is a convex cone whose support hyperplanes are exactly the sets of the form $H_f = \{ p\colon p\cdot (f-\lpr(f)1_\states) = 0 \}$ for $f\in \mathcal K$, and the origin is exactly the intersection of all support hyperplanes: $\mathbf{0}\cdot (f-\lpr(f)1_\states) = 0$ for every $f\in \mathcal K$. We can therefore apply Proposition~\ref{prop-gruber1.4}.  
\end{proof}
\begin{rem}
	In \cite{augustin2014introduction}~Chapter~1, the set constructed as in \eqref{eq-desirable-normal-cone} is called the natural extension of the assessment $\mathcal K^+ = \{f-\lpr(f)\colon f\in \mathcal K\} $. The fact that the set of desirable gambles is the positive hull of marginally desirable assessment $\mathcal K^+$ can also be found in Chapter~2 of the mentioned book. In the above references the set of strictly positive gambles is included separately. Here, however, the positive gambles are exactly the positive combinations of the gambles $1_x$, which are included by Convention~\ref{conv-include-1x}.
\end{rem}
\begin{cor}\label{cor-normal-cone-decomposition}
	Let $\cset$ be a credal set defined by constraints \eqref{eq-cset-constraints-first} and \eqref{eq-constraint-equality}, $E\in \cset$ a linear prevision and $h$ a gamble such that $E(h) = \lpr(h)$. Suppose that $E(f_i) = 0$ for exactly $i\in I\subseteq \{ 1, \ldots, n \}$. Then there exist $\alpha_i\ge 0$ for every $i\in I$ and $\beta \in \RR$ so that 
	\begin{equation}\label{eq-positive-combination+constant}
	h = \sum_{i \in I} \alpha_i f_i + \beta 1_\states. 
	\end{equation}	
\end{cor}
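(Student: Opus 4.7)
The plan is to recognise $h$ as an element of the normal cone $N_\cset(E)$ and then invoke Proposition~\ref{prop-gruber1.4} to read off the required decomposition.

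With the standing normalisation $\lpr(f_i) = 0$ for every $f_i \in \mathcal K^+$ already in force, the credal set $\cset$ is the solution set of $p\cdot f_i \ge 0$ for $i = 1, \ldots, n$ together with the affine constraint $p\cdot 1_\states = 1$. To place $\cset$ within the template of \eqref{eq-convex-polyhedron}, we rewrite the equality as the pair of inequalities $p\cdot 1_\states \ge 1$ and $p\cdot(-1_\states) \ge -1$. Both of these are by construction tight at every element of $\cset$, and in particular at $E$.

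By hypothesis, among the constraints $p\cdot f_i \ge 0$ precisely those indexed by $I$ are tight at $E$. Combining these with the two auxiliary constraints coming from the normalisation, Proposition~\ref{prop-gruber1.4} yields
\[
N_\cset(E) = \mathrm{pos}\bigl(\{f_i : i \in I\} \cup \{1_\states,\,-1_\states\}\bigr) = \mathrm{pos}\{f_i : i \in I\} + \mathrm{span}\{1_\states\},
\]
the last equality holding because $\mathrm{pos}\{1_\states,\,-1_\states\}$ is exactly the one-dimensional subspace spanned by $1_\states$.

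The condition $E(h) = \lpr(h)$ is precisely the characterisation of membership in $N_\cset(E)$ noted right after the definition of the normal cone; hence $h \in N_\cset(E)$, and the displayed identity yields $h = \sum_{i\in I} \alpha_i f_i + \beta\, 1_\states$ with $\alpha_i \ge 0$ and $\beta \in \RR$, as claimed. The only subtle point is the treatment of the equality constraint $p\cdot 1_\states = 1$, which is not directly covered by Proposition~\ref{prop-gruber1.4}; splitting it into two opposite inequalities, both automatically tight at every point of $\cset$, is the clean way through, and explains why $\beta$ is allowed to be an arbitrary real number while the coefficients $\alpha_i$ are constrained to be non-negative.
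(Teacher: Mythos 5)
Your argument is correct, but it handles the normalization constraint differently from the paper. The paper's proof never splits the equality $p\cdot 1_\states = 1$: it passes to the homogeneous cone $\hat\cset$ of \eqref{eq-m-hat}, translates the gamble to $g = h - \lpr(h)1_\states$, checks directly that $p\cdot g \ge 0 = E\cdot g$ for every $p\in\hat\cset$ (so that $E$ minimizes $g$ over $\hat\cset$ and $g\in N_{\hat\cset}(E)$), and then applies Proposition~\ref{prop-gruber1.4} to $\hat\cset$, where the tight constraints at $E$ are exactly the $f_i$ with $i\in I$; this gives $g=\sum_{i\in I}\alpha_i f_i$ and hence the claim with the explicit value $\beta=\lpr(h)$. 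You instead stay with the polyhedron $\cset$ itself, rewrite the equality as two always-tight inequalities, and let Proposition~\ref{prop-gruber1.4} produce the $\mathrm{span}\{1_\states\}$ term as $\mathrm{pos}\{1_\states,-1_\states\}$. Both routes are sound. Yours has the advantage of exhibiting the full normal cone $N_\cset(E)=\mathrm{pos}\{f_i\colon i\in I\}+\mathrm{span}\{1_\states\}$ in a single formula, which is exactly the description the paper uses implicitly afterwards (e.g.\ for the parametrization $h(\low{\alpha},\beta)$); the paper's detour through $\hat\cset$ reuses an object already introduced for the desirability corollary and identifies $\beta$ concretely. Two small points of care: where you cite the remark after \eqref{eq-normal-cone} for the membership $h\in N_\cset(E)$, the paper instead verifies the analogous membership from the inequality $P(h)\ge\lpr(h)$ valid for all $P\in\cset$, which is the more self-contained move; and the inequality in \eqref{eq-normal-cone} is written with the maximizer convention, so it must be read with the opposite sign (as both you and the paper's own proof tacitly do) to be consistent with Proposition~\ref{prop-gruber1.4}.
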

\begin{proof}
	Let $h\in\allgambles$ be a gamble such that $E(h) = \lpr (h)$. Set $g = h-\lpr (h)$. Then, for every $p\in \hat{\cset}$ (see \eqref{eq-m-hat}), $p = \alpha P$ for some $P\in\cset$ and $\alpha \ge 0$. Therefore $p\cdot g = \alpha P\cdot g \ge 0 = E\cdot g$, whence $g\in N_{\hat{\cset}}(E)$. By Proposition~\ref{prop-gruber1.4}, $g = \sum_{i\in I}\alpha_i f_i$ for some positive constants $\alpha_i$. Hence $h = \sum_{i\in I}\alpha_i f_i + \lpr(h)1_\states$, which proves the proposition. 
\end{proof}
\section{The distance between coherent lower previsions}\label{s-dbclp}
\subsection{The definition of the distance}
Let $\lpr$ and $\lpr'$ be two coherent lower previsions on the set of all gambles $\allgambles$ on a finite set $\states$.
We define the distance
between $\lpr$ and $\lpr'$ as 
\begin{equation}
d(\lpr, \lpr') = \max_{f\in\allgambles} \frac{|\lpr(f)-\lpr'(f)|}{\| f\|}, f\neq 0,
\end{equation}
where the norm $\| f \| = \sqrt{f\cdot f}$ is the Euclidean norm in $\RR^{|\states|}$. Clearly, the following alternative definition is equivalent:
\begin{equation}
d(\lpr, \lpr') = \max_{\substack{f\in\allgambles\\ \| f \| = 1}} |\lpr(f)-\lpr'(f)|.
\end{equation}
It is readily verified that the above distance function induces a metric in the set of all lower previsions on $\allgambles$. 
In this section we will analyze the maximal possible distance between two coherent lower previsions that coincide on a finite set of gambles. 

Suppose that $\lpr$ is a lower prevision on $\allgambles$, and the only information about it are the values on a finite set of gambles $\mathcal K\subset \allgambles$. 
That is, $\lpr(f)$ are given for every $f\in\mathcal K$.
We denote the restriction of $\lpr$ to $\mathcal K$ by $\lpr_{\mathcal K}$.  
We also adopt Convention~\ref{conv-include-1x}. 
The natural extension $\low E$ is the minimal (or the least committal) extension of $\lpr_{\mathcal K}$. 
This implies that $\lpr(f)\ge \low{E}(f)$ for every $f\in\allgambles$. 
Therefore, given another extension $\lpr'$ of $\lpr_{\mathcal K}$, we have that 
\begin{equation}\label{eq-maximal-dist-on-gamble}
|\lpr(f)-\lpr'(f)| \le \max\{ \lpr(f)-\low E(f), \lpr'(f)-\low E(f) \}, 
\end{equation}
which implies that 
$d(\lpr, \lpr') \le \max\{ d(\lpr, \low E), d(\lpr', \low E) \}.$
As we are interested in the maximal possible distance between coherent lower previsions coinciding on $\mathcal K$, it will therefore be enough to focus to the case where one of them is the natural extension of $\lpr_{\mathcal K}$. 

In the literature, other possible distances between coherent lower previsions or credal sets are used as well. In \cite{skulj:hable:13MET}, the total variation distance and the Hausdorff distance are used to explore coefficients of ergodicity for imprecise Markov chains. In \cite{Bronstein2008} various other metrics are used for measuring the distances between convex sets and their approximations. The choice of the metric used in a particular case depends on its interpretation, and as far as credal sets are concerned, the Euclidean metric seems to be the most appropriate for our case, because of its relation to the relative distances of the values of the corresponding lower previsions on gambles.  

\subsection{Maximal distance to the natural extension}
Let $\low E$ and $\lpr$ be respectively the natural extension of $\lpr_{\mathcal K}$ and another extension, and  $\cset$ and $\csete$ respectively their credal sets. 
As described in previous sections, both are convex sets and the natural extension is a convex polyhedron with extreme points $\extp(\cset)$. 

Assuming the above notations, we start with the following proposition.
\begin{prop}\label{prop-face-intersection}
	Take some $f\in\mathcal K$ and let $\face_f$ be the corresponding face of $\cset$. Then $\csete\cap \face_f \neq \emptyset$.  	
\end{prop}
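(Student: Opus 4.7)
The plan is to unpack the definitions and chain together two basic facts: that any coherent extension sits inside the natural extension's credal set, and that coherence of $\lpr'$ on $\mathcal K$ produces a linear prevision in $\csete$ attaining the value $\lpr'(f)$.

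First I would observe that since $\low E$ is the \emph{natural} (i.e.\ minimal) extension of $\lpr|_{\mathcal K}$, and $\lpr'$ is another extension of the same restriction, we have $\lpr'(g) \ge \low E(g)$ for every gamble $g\in\allgambles$. Translating this inequality into the language of credal sets gives the containment $\csete\subseteq \cset$, because a linear prevision $P$ dominating $\lpr'$ automatically dominates $\low E$.

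Next, since $f\in\mathcal K$ and $\lpr'$ is coherent, there exists some $P\in\csete$ with $P(f)=\lpr'(f)$. But $\lpr'$ agrees with $\lpr$ on $\mathcal K$, and $\lpr(f)=\low E(f)$, so $P(f)=\low E(f)$. Combined with $P\in\csete\subseteq \cset$, this means $P$ lies in the face $\face_f=\{Q\in\cset\colon Q(f)=\low E(f)\}$, giving a point in $\csete\cap\face_f$.

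No step here poses a real obstacle; the only thing to be careful about is distinguishing the coherence condition for $\lpr'$ (which furnishes the dominating prevision attaining $\lpr'(f)$) from the natural-extension property of $\low E$ (which furnishes the inclusion $\csete\subseteq\cset$). These two ingredients are independent and both necessary, and once combined they deliver the claim immediately.
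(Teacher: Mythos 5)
Your proof is correct and uses the same two ingredients as the paper's own argument --- the containment $\csete\subseteq\cset$ coming from minimality of the natural extension, and the attainment of $\lpr(f)$ by some element of $\csete$ via coherence/compactness --- merely phrased directly rather than as the paper's proof by contradiction. No gaps.
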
 
\begin{proof}
	Clearly, $\face_f$ contains exactly all linear previsions $P$ in $\cset$ such that $P(f) = \low P(f)$. 
	If no $P\in \csete$ belongs to $\face_f$, this then implies that $P(f) > \lpr(f)$ for every $P\in \csete$, and since $\csete$ is compact, this would imply that $\min_{P\in \csete} P (f) > \lpr(f)$, which contradicts the assumptions. 
\end{proof}
\begin{cor}\label{cor-5}
	Let $h\in \allgambles$ be an arbitrary gamble.  
	Then:
	\begin{enumerate}[(i)]
		\item $\lpr (h)\le \max_{P\in \face_f} P(h)   ~\text{for every}~ f\in\mathcal K; $ \label{cor-maximum-face}
		\item $\lpr(h)  \le \min_{f\in \mathcal K} \max_{P\in \cset_f} P(h)$; the inequality is tight in the sense that for every $h\in\allgambles$ an extension of $\lpr_{\mathcal K}$ exists that gives equality in the equation.  \label{eq-lpr-dominated-by-face}
		\item $\lpr(h) \le \min_{f\in\mathcal K} \max_{E\in\extp_f} E(h)$ where $\extp_f$ is the set of extreme points of the face $\face_f$; and the inequality is again tight. \label{cor-upper-bound-ext-points}
	\end{enumerate}
	
\end{cor}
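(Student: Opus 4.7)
For part (i), my plan is to apply Proposition~\ref{prop-face-intersection} to extract a linear prevision $P\in\csete\cap\face_f$ and read off the bound: since $\lpr(h)=\min_{Q\in\csete}Q(h)\le P(h)\le\max_{Q\in\face_f}Q(h)$, the inequality is immediate. The inequality in (ii) then follows from (i) by minimizing over $f\in\mathcal K$. Part (iii) reduces to (ii) once one observes that $\face_f=\cset\cap\{P\colon P\cdot f=\lpr(f)\}$ is itself a convex polytope, so the linear functional $P\mapsto P(h)$ attains its maximum on $\face_f$ at some extreme point, and $\extp_f\subseteq\extp(\cset)$; thus $\max_{P\in\face_f}P(h)=\max_{E\in\extp_f}E(h)$ for each $f$, and taking the minimum over $\mathcal K$ delivers the bound in (iii).

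The substantive step is the tightness claim in (ii) and (iii). Given $h\in\allgambles$, set
\[ \alpha=\min_{f\in\mathcal K}\max_{E\in\extp_f}E(h), \]
and pick $f_0\in\mathcal K$ and $E_0\in\extp_{f_0}$ achieving this value. My plan is to define $\lpr^\star$ as the lower envelope of the sliced credal set
\[ \csete^\star=\cset\cap\{P\colon P(h)\ge\alpha\}, \]
which is closed and convex. Three checks are needed. First, $\csete^\star$ is non-empty, since $E_0\in\extp_{f_0}\subseteq\cset$ and $E_0(h)=\alpha$. Second, for every $f\in\mathcal K$ the face $\face_f$ meets $\csete^\star$, because the defining minimum gives $\max_{E\in\extp_f}E(h)\ge\alpha$, so some $P\in\face_f$ has $P(h)\ge\alpha$; this yields $\lpr^\star(f)=\lpr(f)$ for every $f\in\mathcal K$, so $\lpr^\star$ is a genuine extension of $\lpr|_{\mathcal K}$. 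Third, $\lpr^\star(h)=\alpha$, with the lower bound forced by the slicing constraint and the upper bound witnessed by $E_0$.

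The main obstacle I expect is ensuring that the cut-off construction does not distort the values of $\lpr$ on $\mathcal K$. The threshold $\alpha$ is calibrated precisely so that every face $\face_f$, $f\in\mathcal K$, survives intersection with $\{P\colon P(h)\ge\alpha\}$; pushing $\alpha$ any higher would amputate at least one face $\face_{f_0}$ and spoil the identity $\lpr^\star(f_0)=\lpr(f_0)$. This calibration is the heart of the argument and encodes why the bound is sharp; everything else is routine convex geometry with no further analytic subtlety.
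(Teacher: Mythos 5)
Your proof is correct, and the first halves (the three inequalities) follow the paper's route exactly: Proposition~\ref{prop-face-intersection} gives a point of $\csete\cap\face_f$, minimizing over $f$ gives (\ref{eq-lpr-dominated-by-face}), and passing to extreme points of the polytope $\face_f$ gives (\ref{cor-upper-bound-ext-points}). Where you genuinely diverge is the tightness construction. The paper picks, for each $f\in\mathcal K$, a maximizer $P_f=\arg\max_{P\in\face_f}P(h)$ and takes $\cset'$ to be the convex hull of the finite set $\{P_f\colon f\in\mathcal K\}$; convexity then forces $\min_{P\in\cset'}P(h)=\min_f P_f(h)$, which is the bound. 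You instead slice the full credal set with the half-space $\{P\colon P(h)\ge\alpha\}$ at the calibrated level $\alpha=\min_f\max_{E\in\extp_f}E(h)$, and your three checks (non-emptiness via $E_0$, intersection with every $\face_f$ so that the values on $\mathcal K$ are preserved, and $\lpr^\star(h)=\alpha$ by construction) are all sound. The two witnesses sit at opposite ends of the family of extensions attaining the bound: the paper's $\cset'$ is a small, explicitly finitely generated credal set (at most $|\mathcal K|$ vertices, and in fact $\cset'\subseteq\csete^\star$ since each $P_f(h)\ge\alpha$), whereas your $\csete^\star$ is the largest, least committal extension satisfying $\lpr^\star(h)\ge\alpha$, which makes the calibration of the threshold --- and hence \emph{why} the bound cannot be improved --- more transparent. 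Both arguments are equally rigorous; the paper's has the minor advantage of exhibiting the extremal extension with an explicit finite vertex description, yours of isolating the exact mechanism that prevents pushing $\alpha$ higher.
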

\begin{proof}
	(\ref{cor-maximum-face}) is an immediate consequence of Proposition~\ref{prop-face-intersection}. 
	
	The inequality in (\ref{eq-lpr-dominated-by-face}) is a direct consequence of (\ref{cor-maximum-face}). It remains to prove that there is an extension of $\lpr_{\mathcal K}$ where the equality is reached. 
	
	Let $\face_f$ be a face of $\cset$ and let $P_f \in \arg\max_{P\in\face_f} P(h)$. 
	Let $\cset'$ be the convex hull of $\{P_f\colon f\in \mathcal K \}$ and $\lpr'$ the corresponding coherent lower prevision, which coincides with $\lpr$ on $\mathcal K$ by construction, and thus must satisfy the inequality (ii).  
	For every $P\in\cset'$, on the other hand, we have that $P=\sum_{f\in \mathcal K} \alpha_f P_f$, for some collection of values $\alpha_f\ge 0$ for every $f\in \mathcal K$ and $\sum_{f\in \mathcal K}\alpha_f = 1$.  
	Thus, 
	\begin{equation}
	P(h) = \sum_{f\in \mathcal K} \alpha_f P_f(h) 
	\ge \min_{f\in \mathcal K} P_f(h)
	= \min_{f\in \mathcal K} \max_{P\in \face_f}P(h) 
	\end{equation}
	Hence, $\low P'(h) = \min_{P\in \cset'} P(h) \ge \min_{f\in \mathcal K} \max_{P\in \face_f}P(h)$, which combined with the above reverse inequality gives the required equality. 
	
	The fact that extremal values are reached in extreme points easily implies (\ref{cor-upper-bound-ext-points}).
\end{proof}

Now we can express the maximal possible distance between two arbitrary extensions of $\lpr_{\mathcal K}$ in terms of its natural extension alone. 
\begin{cor}
	Let $\low E$ be the natural extension and $\lpr$ and $\lpr'$ two other extensions of $\lpr_{\mathcal K}$, and $h\in \allgambles$ a gamble. Then 
		\begin{equation}
	|\lpr(h)-\lpr'(h)| \le \min_{f\in\mathcal K} \max_{P\in\extp_f} P(h) - \low E(h)
		\end{equation}
	and
	\begin{equation}\label{eq-dist-next}
	d(\lpr, \lpr') \le \max_{\| h \| = 1} \min_{f\in\mathcal K} \max_{P\in\extp_f} P(h) - \low E(h).
	\end{equation}
\end{cor}
\begin{proof}
	The first inequality is a direct consequence of Corollary~\ref{cor-5}(\ref{cor-upper-bound-ext-points}) and Eq. \eqref{eq-maximal-dist-on-gamble}. The second inequality is an immediate consequence of the first one, definition of the distance between two coherent lower previsions and the fact that $\low E(h)$ is less than $P(h)$ for every feasible $P$. 
\end{proof}
Equation \eqref{eq-dist-next} gives the maximal possible distance between two unknown extensions of $\lpr_{\mathcal K}$ entirely in terms of its natural extension. From it we will derive an optimization methods providing computable upper bounds. 

By the definition of $\low E$ we have that $\low E(h) =  \min_{E\in\extp}E(h)$, whence we rewrite \eqref{eq-dist-next} into:
\begin{align}
d(\lpr, \lpr') & \le \max_{\| h \| = 1} \max_{E\in\extp} \min_{f\in\mathcal K} \max_{P\in\extp_f}  |P(h) - E(h)| \label{eq-maximizing-distance} \\
& = \max_{E\in\extp} \max_{\| h \| = 1}  \min_{f\in\mathcal K} \max_{P\in\extp_f}  |P(h) - E(h) \\
\intertext{by interchanging $\max_{\| h \| = 1}$ and $\min_{f\in \mathcal K}$ we obtain the inequality}
& \le  \max_{E\in\extp}\min_{f\in\mathcal K} \max_{P\in\extp_f}   \max_{\| h \| = 1} P(h) - E(h) \label{eq-maximizing-distance2}\\
& =  \max_{E\in\extp}\min_{f\in\mathcal K} \max_{P\in\extp_f}   d(P, E),
\end{align} 
where $d(P, E)$ is the Euclidean distance between extreme points $P$ and $E$. The last equality follows from the fact that $d(P, E) = \max_{\| h \| = 1} |P(h) - E(h)| = \max_{\| h \| = 1} \max\{ P(h) - E(h), E(h) - P(h) = P(-h) - E(-h) \}$ and since $\| -h \| = \| h \|$, the absolute value may be omitted. 

Now denote 
\begin{equation}\label{eq-bar-d}
\bar d(E, f) = \max_{P\in\extp_f}   d(P, E),
\end{equation}
which is the maximal Euclidean distance between an extreme point $E$ and a face $\cset_f$. Thus we obtain the following formula:
\begin{equation}\label{eq-upper-euclidean}
d(\lpr, \lpr') \le \max_{E\in\extp}\min_{f\in \mathcal K} \bar d(E, f).
\end{equation}
Since $E$ and $P$ in the above expressions are (extreme) points in $\RR^{|\states|}$, their Euclidean distances can be found easily by calculating the Euclidean norms $\|P-E\|$. Particularly, calculating $\bar d(E, f)$ requires calculating the Euclidean distances between $E$ and all extreme points of the face $\cset_f$. Finally, the RHS expression in \eqref{eq-upper-euclidean} is calculated by finding $\bar d(E, f)$ for all pairs of extreme points and gambles in $\mathcal K$. 

\subsection{Improved bounds}
Equation \eqref{eq-upper-euclidean} gives an upper bound for the difference between coherent lower previsions coinciding on a set of gambles, however, the estimate is systematically too conservative. This is caused by the fact that extreme points $E$ can only maximize expression \eqref{eq-maximizing-distance} for some $h$ if $E(h)=\low E(h)$. This means that the domain for $h$ in \eqref{eq-maximizing-distance2} should be restricted to those gambles $h$ that reach the lowest value $\low E(h)$ in $E$. In other words, $h$ should belong to the normal cone $N_\cset (E)$. 

Therefore, instead of taking the Euclidean distance between $E$ and $P$ in $\eqref{eq-bar-d}$, we should take the following distance:
\begin{equation}\label{eq-normed-distance}
d_E(E, P) = \max_{h\in N_\cset(E)}\frac{|P(h)-E(h)|}{\| h \|},
\end{equation} 
which we call the \emph{normed distance} between $E$ and $P$. 

The geometrical intuition behind replacing Euclidean distance with the above distance function is the following. Given a gamble $h$, the difference $P(h)-E(h)$ can be viewed as the inner product $(P-E)\cdot h$, which depends on the angle between $(P-E)$ and $h$. As the normal cone contains elements that are orthogonal to $P-E$ for  adjacent extreme points $P$, we may expect that the other elements are nearly orthogonal too, especially in the case of narrow normal cones. In Figure~\ref{fig-normal-cones} such situation can be observed in the case of the normal cone of $E_1$, in contrast to the case of $E_5$, where the normal cone is wide. Therefore, we would, for instance, expect that the normed distances between $E_1$ and its adjacent extreme points would be significantly smaller than the Euclidean distance, in contrast to the case of  $E_5$. Analytically we demonstrate this in Example~\ref{ex-dist}. 

In the sequel we represent the calculation of the normed distance in the form of a quadratic programming problem. 

\subsubsection*{Minimum norm elements of the normal cone.}
Consider an element $h$ of the form \eqref{eq-positive-combination+constant}. Given a pair of expectation functionals $E$ and $P$, the distance $P(h)-E(h)$ does not depend on $\beta$. In order to maximize the normed distance \eqref{eq-normed-distance}, we must consider the representative with the minimum norm, as the norm appears in the denominator of the expression. The characterization of the minimal norm element of the form \eqref{eq-positive-combination+constant} follows. 
\begin{prop}\label{prop-minimal-norm}
	Let $h$ be a gamble. Then $\| h + \beta 1_\states \| \ge \| h \|$ for every $\beta\in \RR$ if and only if $h\cdot 1_\states = 0$. 
\end{prop}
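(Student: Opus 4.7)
The plan is to reduce the claim to a one-variable quadratic minimization by squaring the norm. Consider the function
\[
\varphi(\beta) = \| h + \beta 1_\states \|^2 = (h + \beta 1_\states)\cdot (h + \beta 1_\states).
\]
Expanding using bilinearity of the inner product gives
\[
\varphi(\beta) = \| h \|^2 + 2\beta\, (h\cdot 1_\states) + \beta^2\, (1_\states \cdot 1_\states) = \| h \|^2 + 2\beta\, (h\cdot 1_\states) + \beta^2 |\states|,
\]
since $1_\states \cdot 1_\states = |\states|$. This is a quadratic in $\beta$ with positive leading coefficient, so it attains its unique minimum at $\beta^\ast = -(h\cdot 1_\states)/|\states|$.

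For the forward direction (``only if''), I would argue contrapositively: if $h\cdot 1_\states \ne 0$, then $\beta^\ast \ne 0$ and consequently $\varphi(\beta^\ast) < \varphi(0) = \| h \|^2$, so $\| h + \beta^\ast 1_\states \| < \| h \|$, contradicting the hypothesis. For the backward direction (``if''), assume $h\cdot 1_\states = 0$; then the cross term vanishes and $\varphi(\beta) = \| h \|^2 + \beta^2 |\states| \ge \| h \|^2$ for every $\beta\in\RR$, so $\| h + \beta 1_\states \| \ge \| h \|$ after taking square roots (both sides nonnegative).

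I do not anticipate any serious obstacle: the proof is a direct computation, and the characterization of the minimizer of a one-variable quadratic is elementary. The only mild subtlety is to remember that $\| 1_\states \|^2 = |\states|$ (rather than $1$), which is what makes $\beta^\ast$ well defined and the expression $h\cdot 1_\states = 0$ the correct orthogonality condition.
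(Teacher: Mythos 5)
Your proof is correct and follows essentially the same route as the paper's: square the norm, expand the quadratic in $\beta$, and observe that its unique minimizer is zero precisely when $h\cdot 1_\states = 0$. In fact your computation is the more careful of the two — the paper's displayed expansion contains typos (it writes $\|h\|$ for $\|h\|^2$, drops the coefficient $1_\states\cdot 1_\states = |\states|$ on $\beta^2$, and states the minimizer as $\beta = h\cdot 1_\states$ rather than $-(h\cdot 1_\states)/|\states|$), though these do not affect the conclusion that the minimizer vanishes exactly when $h\cdot 1_\states$ does.
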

\begin{proof}
	We have that $\| h + \beta 1_\states\|^2 = \| h \| + \beta^2 + 2\beta h\cdot 1_\states$, which has minimum in $\beta = -h\cdot 1_\states$. Hence the minimizing $\beta$ equals 0 exactly if $h\cdot 1_\states$ does. 
\end{proof}
\begin{cor}\label{cor-ncone-positive-basis}
	Let $E, h$ and $I$ be as in Corollary~\ref{cor-normal-cone-decomposition} and let $f'_i$ be the unique vectors such that $f_i-f'_i=c1_\states$ and $f'_i\cdot 1_\states=0$ for every $i\in I$. Then, as follows from Corollary~\ref{cor-normal-cone-decomposition}, there exist some $\alpha'_i \ge 0$ for every $i\in I$ and $\beta'\in \RR$ so that
	\begin{equation}\label{eq-positive-cone-decomp1}
	h = \sum_{i\in I} \alpha'_i f'_i + \beta' 1_\states. 
	\end{equation} 
	Moreover, 
	\begin{equation}
	\left\| \sum_{i\in I} \alpha'_i f'_i \right\| \le \left\| \sum_{i\in I} \alpha'_i f'_i + \beta 1_\states \right\| ~\text{for every }\beta\in \RR.
	\end{equation}
\end{cor}
\begin{proof}
	Since $f'_i\cdot 1_\states = 0$, we have that $\left ( \sum_{i\in I} \alpha'_i f'_i \right) \cdot 1_\states = 0$, whence by Proposition~\ref{prop-minimal-norm} it follows that this is the minimal-norm gamble of the form \eqref{eq-positive-cone-decomp1}. 
\end{proof}
Let $I$ and $f'_i$, for $i\in I$, be as in Corollary~\ref{cor-ncone-positive-basis} and let $\underline{\alpha}\colon I\to [0, \infty)$ be a map and $\beta\in \RR$ a constant (we will write $\alpha_i$ instead of $\alpha(i)$). Then we define $h(\underline{\alpha}, \beta) = \sum_{i\in I} \alpha_if'_i + \beta 1_\states$. Clearly, $h(\low{\alpha}, \beta)\in N_\cset(E)$ and every element of $N_\cset(E)$ is of the form $h(\low{\alpha}, \beta)$, by Corollary~\ref{cor-normal-cone-decomposition}. 
\begin{cor}\label{cor-minimal-normed-distance}
	The following equality holds:
	\begin{equation}\label{eq-max-normed-distance}
	\max_{(\low{\alpha}, \beta)} \dfrac{| E(h(\low\alpha, \beta)) - P(h(\low\alpha, \beta)) |}{\|h(\low\alpha, \beta) \|} = \max_{\low\alpha} \dfrac{| E(h(\low\alpha, 0)) - P(h(\low\alpha, 0))|}{\|h(\low\alpha, 0) \|}
	\end{equation}
\end{cor}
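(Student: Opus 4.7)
The plan is to show that the ratio on the left-hand side, viewed as a function of $\beta$ for fixed $\low\alpha$, is always maximized at $\beta = 0$. Once this is established, the supremum over $(\low\alpha,\beta)$ reduces to a supremum over $\low\alpha$ alone, which is precisely the right-hand side.

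First I would analyze the numerator. Since $E$ and $P$ are both linear previsions, they are probability mass vectors and satisfy $E(1_\states) = P(1_\states) = 1$, so by linearity
\begin{equation*}
E(h(\low\alpha,\beta)) - P(h(\low\alpha,\beta)) = \sum_{i\in I}\alpha_i\bigl(E(f'_i)-P(f'_i)\bigr) + \beta(E(1_\states)-P(1_\states)) = E(h(\low\alpha,0)) - P(h(\low\alpha,0)).
\end{equation*}
In particular, the numerator of \eqref{eq-max-normed-distance} is independent of $\beta$.

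Next I would address the denominator. By construction each $f'_i$ satisfies $f'_i\cdot 1_\states = 0$, so the element $g := \sum_{i\in I}\alpha_i f'_i = h(\low\alpha,0)$ also satisfies $g\cdot 1_\states = 0$. Proposition~\ref{prop-minimal-norm}, applied to $g$, then yields
\begin{equation*}
\|h(\low\alpha,\beta)\| = \|g + \beta 1_\states\| \ge \|g\| = \|h(\low\alpha,0)\| \quad\text{for every }\beta\in\RR.
\end{equation*}

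Combining the two observations, for every fixed $\low\alpha$ and every $\beta\in\RR$,
\begin{equation*}
\dfrac{|E(h(\low\alpha,\beta)) - P(h(\low\alpha,\beta))|}{\|h(\low\alpha,\beta)\|} \le \dfrac{|E(h(\low\alpha,0)) - P(h(\low\alpha,0))|}{\|h(\low\alpha,0)\|},
\end{equation*}
with equality at $\beta = 0$. Taking the maximum over $\low\alpha$ on both sides yields the claimed equality in \eqref{eq-max-normed-distance}. There is essentially no hard step here; the proof is a direct assembly of Proposition~\ref{prop-minimal-norm} with the fact that the difference $E - P$ annihilates the constant gamble $1_\states$. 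The only subtlety worth stating explicitly is that the representation $h = h(\low\alpha,\beta)$ is unique up to the choice of $\beta$ once we fix the orthogonalized basis $\{f'_i\}_{i\in I}$, so the two maxima are indeed taken over comparable families.
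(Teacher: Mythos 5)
Your proof is correct and follows essentially the same route as the paper's: the numerator is invariant under adding $\beta 1_\states$ because $E(1_\states)=P(1_\states)=1$, and the denominator is minimized at $\beta=0$ via Proposition~\ref{prop-minimal-norm} applied to $h(\low\alpha,0)$, which satisfies $h(\low\alpha,0)\cdot 1_\states=0$. You merely spell out the two ingredients that the paper's one-line proof leaves implicit.
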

\begin{proof}
	Since $| E(h+\beta 1_\states) - P(h+\beta 1_\states) | = | E(h) - P(h) |$, the maximum of the expression is achieved at $h$ with the minimum norm, which is the one with $\beta = 0$. 
\end{proof}
\subsubsection*{The calculation of the normed distance between expectation functionals.}
Take two linear expectation functionals $P$ and $E\in \cset$ and let $I$ and $f'_i$ for $i\in I$ be as in Corollary~\ref{cor-ncone-positive-basis}.  Our goal is to find the normed distance \eqref{eq-normed-distance}.
The absolute value in the numerator of \eqref{eq-normed-distance} can be omitted because $E(h) =\min_{P\in\cset} P(h)$ for every $h\in N_\cset(E)$. 
By Corollary~\ref{cor-minimal-normed-distance}, every $h\in N_\cset(E)$ that can minimize the above expression is of the form $h(\low{\alpha}, 0)$. Since $E$ and $P$ are themselves vectors too, we can denote $D = P-E$, and write
$ P(h) - E(h) = (P-E)\cdot h = D\cdot h. $

Now we can decompose every $f'_i$ for $i\in I$ as
$f'_i = \lambda_i D + u_i,$
so that $D\cdot u_i = 0$. 
Given that $h = \sum_{i\in I}\alpha_if'_i$, we obtain 
$h = (\low{\alpha}\cdot\low{\lambda})D + \low{\alpha}\cdot U,$
where $U$ is the matrix whose rows are $u_i, \low \lambda$ is the column vector with components $\lambda_i$ and the vectors $f'_i$ are also written as row vectors. We also assume $\low{\alpha}$ to be a column vector.

Further we have that
$\| h \|^2 = h\cdot h = \| D \|^2 \low{\alpha}\, \low{\lambda}\, \low{\lambda}^t \low{\alpha}^t + \low{\alpha}U U^t \low{\alpha}^t.$
Now denote $\Pi = \| D \|^2\low{\lambda}\, \low{\lambda}^t + U U^t$ and write
$\| h \|^2 =  \low{\alpha} \Pi \low{\alpha}^t. $
Clearly, $\Pi$ is a symmetric and positive semi-definite matrix. 

Moreover, we have that 
$P(h)-E(h) = D\cdot (\low{\alpha}\cdot \low{\lambda}) D = (\low{\alpha}\cdot \low{\lambda}) \| D \|^2.$
Our goal is the maximization of expression \eqref{eq-normed-distance}. Thus we need to maximize 
\begin{equation}\label{eq-maximal-normed-distance-explicit}
\varphi(\low{\alpha}) = \frac{ (\low{\alpha}\cdot \low{\lambda}) \| D \|^2}{\sqrt{\low{\alpha} \Pi \low{\alpha}^t}}
\end{equation}
over the set of all $I$-vectors $\low{\alpha}$ with non-negative components.
Clearly, for every non negative constant $k$ we have that $\varphi(k\low{\alpha}) = \varphi(\low{\alpha})$. Moreover, only those $\low\alpha$ for which the numerator in $\varphi (\low{\alpha})$ is positive are of interest, and then multiplying $\low{\alpha}$ by a suitable positive constant can ensure that the numerator is 1. Maximizing $\varphi(\low{\alpha})$ is then equivalent to minimizing the nominator, which yields the following quadratic programming problem:
\begin{quote}
	Minimize:
	\begin{align}
	\low{\alpha} \Pi \low{\alpha}^t \label{eq-minimizing-nc-objective}
	\intertext{subject to }
	(\low{\alpha}\cdot \low{\lambda}) \| D \|^2 = 1 \label{eq-minimizing-nc-constraints}\\
	\low{\alpha} \ge 0  \label{eq-minimizing-nc-nonnegativity}
	\end{align}	
\end{quote}

\begin{ex}	\label{ex-dist}
	Consider the lower prevision $\lpr$ from Example~\ref{ex-lpr1}. We will calculate the distance $d_{E_1}(E_1, E_5)$, where  $E_1 = (0.4,	0.32, 	0.28)$ and $E_5 = (0.15,	0.37,	0.48	) $.
	First we have: 
	\[ D= E_5-E_1 = (-0.2462, 0.0492, 0.1969) , \] 
	and its norm, which is the Euclidean distance between the two extreme points is $\| D \|  = 0.3191$. 
	The positive basis of $N_\cset (E_1)$ consists of the transformed gambles 
	\begin{align*}
	f'_1 & = f_1 - f_1\cdot 1_\states/3 = (-0.5, 0.5, 0)  \\
	f'_5 &= f_5 - f_5\cdot 1_\states/3 = (-0.2, 0.6, -0.4). 
	\end{align*}
	(see Corollary~\ref{cor-ncone-positive-basis}). 
	
	We have $f'_1 = 1.451D + (-0.1429, 0.4286, -0.2857)$, and since $f'_5$ is orthogonal to $D$, it follows that $u_5 = f'_5$ and $\lambda_2 = 0$. Thus
	$ \low{\lambda} = \begin{bmatrix}
	1.451 \\
	0 \\	
	\end{bmatrix}$ and
	$ 
	U = \begin{bmatrix}
	-0.14&	0.43 &	-0.29 \\
	-0.20&	0.60 &	-0.40 \\	
	\end{bmatrix} $
	which gives
	$ \Pi = \| D \|^2\low{\lambda}\, \low{\lambda}^t + U U^t = \begin{bmatrix}
	0.5 &	0.4  \\
	0.4&	0.56 \\	
	\end{bmatrix}. $
	Taking $\low{\alpha} = (\alpha_1, \alpha_2)^t$, we obtain the objective function to be minimized:
	$ \low{\alpha}\Pi\low{\alpha}^t = 0.5 \alpha_1^2+0.8 \alpha_1 \alpha_2+0.56 \alpha_2^2 $
	subject to 
	$ \| D\|^2\low{\alpha}\cdot \low{\lambda} = \| D\|^2 \lambda_1 \alpha_1 = 1 $
	whence $\alpha_1 = 6.7708$. Substituting $\alpha_1$ in the objective function we obtain
	$ \low{\alpha}\Pi\low{\alpha}^t = 22.9219 + 5.41664 \alpha_2 + 0.56 \alpha_2^2, $
	which has to be minimized subject to $\alpha_2 \ge 0$. The minimum is obtained for $\alpha_2 = 0$, with the minimal value of objective function $\low{\alpha}\Pi\low{\alpha}^t$ equal to $22.9219$. 
	Now 
	$ d_{E_1}(E_1, E_5) = \varphi(\low{\alpha}) = 1/\sqrt{22.9219} = 0.2089. $
	Note that this is significantly less than the Euclidean distance between the points, which is equal to $\| D \| =  0.3191$.
\end{ex}
The results in the example show that the maximal normed distance is reached on the gamble that makes the smallest angle with $D$ among all gambles in the normal cone, and in our case this is obviously $f'_1$, since adding any positive part of $f'_5$, which is perpendicular to $D$, would only increase the angle. The normed distance is in general clearly bounded by the Euclidean distance, which is reached only in the case where the normal cone contains a vector that is parallel with $D$. In most cases, however, particularly when the normal cones are narrow, the normed distance is typically significantly lower than the Euclidean distance. 

In the next section we provide an algorithm for calculating the bound on the distance between coherent extensions of a coherent lower prevision $\lpr_{\mathcal K}$. It calculates the value of 
\begin{equation}\label{eq-final-bound}
  d_{\max} = \max_{E\in\extp}\min_{f\in\mathcal K} \max_{P\in\extp_f}   d_E(E, P),
\end{equation}
which, based on the inequality \eqref{eq-maximizing-distance2}, bounds the maximal distance. To calculate the above bound, we need to consider every extreme point of $\cset$ and calculate the normed distance to the extreme points of all faces $\cset_f$ of $\cset$. The goal being finding the face whose most distant extreme point from $E$ is nearest to it. This obviously requires a lot of redundant analysis. The following easy criterion allows reducing the number of optimization steps substantially.  
\begin{prop}\label{prop-exclude-extreme}
	Let $E$, $F$ and $F'$ be linear previsions and $h\in N_\cset (E)$. Suppose that $F'(f_i) \ge F(f_i)$ for all the elements of the positive basis of $N_\cset(E)$. Then $F'(h)\ge F(h)$ for every $h\in N_\cset(E)$. 
\end{prop}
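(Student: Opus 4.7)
The plan is to exploit the fact, established in Corollary~\ref{cor-normal-cone-decomposition}, that every element of the normal cone $N_\cset(E)$ admits a decomposition as a positive combination of the basis vectors $f_i$ plus a multiple of $1_\states$. Writing the inequality $F'(h) \ge F(h)$ as $(F'-F)(h) \ge 0$ and applying linearity to such a decomposition will reduce the problem to checking sign on each $f_i$ (given) and on $1_\states$ (automatic, because $F$ and $F'$ are linear previsions and so agree on $1_\states$).

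More concretely, I would proceed as follows. First I would invoke Corollary~\ref{cor-normal-cone-decomposition}: since $h \in N_\cset(E)$, there exist coefficients $\alpha_i \ge 0$ for $i \in I$ and a scalar $\beta \in \RR$ with
\begin{equation*}
h = \sum_{i \in I} \alpha_i f_i + \beta 1_\states,
\end{equation*}
where $I$ indexes exactly those elements $f_i$ of the positive basis of $N_\cset(E)$ on which $E$ attains its lower-prevision value. Then, using the linearity of $F$ and $F'$, I would expand
\begin{equation*}
F'(h) - F(h) = \sum_{i \in I} \alpha_i \bigl(F'(f_i) - F(f_i)\bigr) + \beta\bigl(F'(1_\states) - F(1_\states)\bigr).
\end{equation*}

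The final step is to note that each term on the right-hand side is non-negative. By hypothesis, $F'(f_i) - F(f_i) \ge 0$ for every $i \in I$, and $\alpha_i \ge 0$, so the sum is non-negative. For the remaining term, because $F$ and $F'$ are linear previsions (i.e.\ expectation functionals associated with probability mass vectors), one has $F'(1_\states) = F(1_\states) = 1$, hence $\beta\bigl(F'(1_\states) - F(1_\states)\bigr) = 0$. Combining these gives $F'(h) \ge F(h)$, as required.

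There is no substantial obstacle here: the proof is essentially just applying Corollary~\ref{cor-normal-cone-decomposition} and using the normalization property of linear previsions. The only mild subtlety worth flagging explicitly is the role of the constant term $\beta 1_\states$ in the decomposition, which must be handled separately from the positive combination of the $f_i$; it does no harm precisely because both $F$ and $F'$ integrate $1_\states$ to the same value.
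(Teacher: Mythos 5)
Your proof is correct and follows essentially the same route as the paper, which simply observes that every $h\in N_\cset(E)$ is a positive combination of the basis elements $f_i$ and leaves the rest implicit. You merely spell out the details the paper omits, including the correct observation that the $\beta 1_\states$ term is harmless because $F(1_\states)=F'(1_\states)=1$.
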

\begin{proof}
	An easy consequence of the fact that every element $h\in N_\cset(E)$ is a positive combination of elements $f_i$ contained in the positive basis of $N_\cset(E)$. 
\end{proof}
In the circumstances described by the above proposition we will say that an extreme point $F'$ \emph{dominates} $F$ on $N_\cset(E)$. If an extreme point $F'$ dominates $F$, then $d_E(E, F') \ge d_E(E, F)$. 

\section{Algorithm}\label{s-a}
\subsection{Outline}
The algorithm for finding the maximal distance between a lower prevision and the natural extension of its restriction to $\mathcal K$ is based on equation \eqref{eq-final-bound}. As shown in previous sections, the maximal distance can be computed in terms of extreme points $\extp$ of $\cset$. Efficient algorithms for finding the extreme points are known (\cite{dyer1977algorithm, DYER1982359, matheiss1980survey, chen1991line, Barber:1996:QAC:235815.235821}), whose worst case complexity is estimated to $O(n^2dv)$ (see e.g. \cite{Avis1992}), where $n$ is the number of constraints, $d$ the dimension, and $v$ the number of extreme points (vertices). 

For every extreme point $E$ we need to find the face whose most distant point is nearest to $E$. It is reasonable to start with the faces nearest to $E$, which certainly are those, whose extreme points include $E$. Finding the maximal distances for those faces gives a reasonable estimate of the maximal distance for the given $E$; however, there might exist faces whose most distant points are nearer than that. Yet, examining all faces would mean a lot of redundant analysis. In fact, in most cases examining the neighbour faces gives reasonable estimate, which is seldom improved by analysing the remaining faces.  

Proposition~\ref{prop-exclude-extreme} gives a useful criterion for filtering out faces that are too distant from $E$, and thus reducing the number of faces that need to be examined in cases where we want the exact value of \eqref{eq-final-bound}. That is, a face $\mathcal M_{f'}$ is filtered out if there exists an already analyzed face $\mathcal M_{f}$, so that every extreme point of $\mathcal M_{f'}$ dominates all extreme points of $\mathcal M_{f}$. In that case the maximal distance from $E$ to $\mathcal M_{f'}$ clearly cannot be smaller than the maximal distance to $\mathcal M_{f}$. And since we are looking for the minimal maximal distance, $\mathcal M_{f'}$ can be left out. Filtering out such faces is relatively fast operation that significantly reduces the number of quadratic optimizations needed. 

Applied in another way, Proposition~\ref{prop-exclude-extreme} also allows reducing the number of calculations of the distance for the extreme points within a face. If an extreme point, whose distance to $E$ has already been calculated, dominates some other extreme points in the same face, they can clearly be left out of calculations because they cannot produce a larger distance from $E$. 
\subsection{Parts of the algorithm}

\subsubsection*{Complete constraints} This part adds the non-negativity constraints of the form $p\cdot 1_x \ge 0$ and then removes the possible loose constraints. A constraint $p\cdot f \ge \lpr(f)$ is loose if there is no extreme point $E\in\extp$ such that $E(f) = \lpr(f)$. Note that except for the non-negativity constraints, coherence of the lower prevision in principle prevents the existence of loose constraints. The function {\sc RemoveRedundantConstraints}(\texttt{fn, lpr, EP}) returns \texttt{fn} and \texttt{lpr} inducing the same set of extreme points \texttt{EP} and without loose constraints. 
  
 \subsubsection*{Finding extreme points} This step applies one of the existing algorithms for finding extreme points of a convex polyhedron. The inputs of the function  {\sc GenerateExtremePoints}(\texttt{gmb, lpr}) are the set of gambles \texttt{gmb}, which is in the form of an $n\times s$ matrix, where $n$ is the number of gambles and $s$ is their dimension and \texttt{lpr} is the column vector of their lower previsions. The output is the set of extreme points in the form of a $v\times s$ matrix \texttt{V}. 

\subsubsection*{Finding the distance between the extreme points $E$ and $F$ (Algorithm~\ref{alg-nd})} Maximizes the expression \eqref{eq-normed-distance} on the normal cone $N_\cset(E)$. The problem translates to solving quadratic programming problem \eqref{eq-minimizing-nc-objective}--\eqref{eq-minimizing-nc-nonnegativity}. The inputs are the extreme points \texttt{E} and \texttt{P} as vectors of length $s$ and the gambles that form the basis of the normal cone of \texttt{E} in the form of $I\times s$ matrix \texttt{fpos}.
\begin{algorithm}
	\caption{Function: normed distance}\label{alg-nd}
	\begin{algorithmic}[1]
		\Function{NormedDistance}{\texttt{E, P, fpos}} 
			\State \texttt{D} $\gets$ \texttt{P-E}
			\State \texttt{nD} $\gets$ $\mathtt{\sqrt{D\cdot D}}$ \Comment{norm of \texttt{D}}
			\State set \texttt{Dmat} to be the matrix compatible with \texttt{fpos} wit all rows equal \texttt{D}
			\State $\mathtt{\lambda \gets fpos\cdot Dmat /nD}$ \Comment{$\mathtt{\lambda}$ becomes a column vector}
			\State $\mathtt{u \gets fpos - \lambda \cdot Dmat}$ \Comment{\texttt{u} becomes a matrix}
			\State $\mathtt{Pi \gets nD^2\lambda \lambda^t + uu^t}$
			\State $\displaystyle\mathtt{dist \gets \min \alpha^t \Pi \alpha}$ subject to $\mathtt{nD^2\lambda\cdot \alpha = 1, \alpha\ge 0}$
			\State \Comment{minimize over the set of $s$-dimensional vectors} 
			\State \Comment{using quadratic programming}
			\State \Return \texttt{dist}
		\EndFunction
		\end{algorithmic}
\end{algorithm}

\subsubsection*{Finding extreme points dominating/dominated-by an extreme point} According to Proposition~\ref{prop-exclude-extreme}, an extreme point that dominates another extreme point on the basis  of a normal cone, dominates it in the entire cone. This fact allows optimizing the algorithm, since several optimization steps are not needed in the case of dominance. The function is called \\\noindent 
{\sc DominatedExtremePoints($\mathtt{E}$, $\mathtt{points}$, $\mathtt{fpos}$)}. The inputs are an extreme point $\mathtt{E}$, a list of extreme points $\mathtt{points}$ and a list of gambles $\mathtt{fpos}$. The output is a set of indices of those extreme points from the list $\mathtt{points}$ that are dominated by $\mathtt{E}$. 

\subsubsection*{Filter faces} Filters the faces containing extreme points that dominate entire faces already analyzed. These faces can be left out of further analysis. Function {\sc FilterDominatingFaces($\mathtt{fcs}$, $\mathtt{domP}$)} returns those among faces \texttt{fcs}, whose set of extreme points does not intersect the set of dominating extreme points \texttt{domP}.

\subsubsection*{Find the bound on the distance (Algorithm~\ref{alg-fmd})} Finds the maximum of \eqref{eq-final-bound}. As the inputs we take a set of constraints in the form of gambles \texttt{gmb} and their lower previsions \texttt{lpr}. The function returns the maximal possible distance between any two extensions of these assessments to the set of all gambles. 
\begin{algorithm}
	\caption{Function: find maximal distance}\label{alg-fmd}
	\begin{algorithmic}[1]
		\Function{MaximalDistance}{\texttt{gmb, lpr}} 
			\State \texttt{V} $\gets$\ GenerateExtremePoints(\texttt{fn, lpr})
			\State  \texttt{maxDist = $0$} 
			\For {each \texttt{E} in \texttt{V}} 
				\State  \texttt{minDist $\gets$\ $\infty$}
				\State \texttt{fpos} $\gets$\ $\{ \mathtt{f}\in \mathtt{gmb \colon E\cdot f = lpr(f) \}}$ 
				\State \Comment{get support gambles of faces whose extreme point is \texttt{E}}
				\State \Comment{they constitute positive basis of the normal cone}	
				\For {each $\mathtt{f\in fpos\backslash \texttt{added non-negativity constraints}}$} 
					\State \texttt{Vf} $\gets$\ $\{ \mathtt{P}\in \mathtt{V \colon E\cdot f = lpr(f) \}}$ \label{alg-start-block}
					\State \Comment{get all extreme points of the face $\cset_f$}
					\State \texttt{Vf}$\gets$ SortByEuclidianDistanceToE(Vf) 
					\State \Comment{we start with the point that is most distant from $\mathtt{E}$}
					\State  \texttt{maxFaceDist $\gets$\ 0} 
					\State \Comment{maximal distance to an extreme point of the current face}
					\State \texttt{dominated} $\gets \emptyset$ 
					\For {each $\mathtt{P\in Vf}$}  
						\If {\texttt{P $\in$\ dominated} }			
							\State \texttt{d $\gets$\ maxFaceDist} 
							\State \Comment{distance calculation on} 
							\State \Comment{dominated faces is unnecessary}
						\Else
							\State \texttt{d} $\gets$\ NormedDistance(\texttt{E, P, fpos}) \label{alg-nd-call}
						\EndIf
						\State \texttt{maxFaceDist} $\gets$\ max(maxFaceDist, d)
						\State \texttt{VfD}$\gets$ DominatedExtremePoints(\texttt{P, Vf, fpos}) \label{alg-filter-points-start}
						\State \texttt{dominated $\gets$\ dominated $\cup$\ VfD} 
						\State \Comment{exclude all points dominated by \texttt{P} on the normal cone}	\label{alg-filter-points-end}									
					\EndFor					
					\State \texttt{minDist $\gets$\ min(minDist, maxFaceDist)}	\label{alg-end-block}
					\State \texttt{filtGmb $\gets$} FilterDominatingFaces(\texttt{gmb, tested})
					\State \Comment{filter gambles dominating already tested points} \label{alg-filter-faces-start}
					\State repeat steps \ref{alg-start-block}--\ref{alg-end-block} with \texttt{filtGmb} in place of \texttt{Vf} \label{alg-filter-faces-end}
				\EndFor				
				\State \texttt{maxDist $\gets$\ max(maxDist, minDist)}
			\EndFor
			\State \Return \texttt{maxDist}
		\EndFunction
	\end{algorithmic}
\end{algorithm}
In principle, our algorithm calculates the distances between all extreme points that lie in a common face $\cset_f$ for some $f\in \mathcal K$. After that it also checks all other unfiltered faces. This part is meant to ensure that the calculated bound is exact, although, such faces rarely exist or even improve the calculated distances. If only a good estimate is needed, this step may as well be omitted. 

Excluding dominated points within a face (lines \ref{alg-filter-points-start}--\ref{alg-filter-points-end}) significantly improves the efficiency of the algorithm. Empirical testing shows that only distances for a fraction of points then need to be calculated. Since the calculation of the distances is by far the slowest part of the algorithm, this significantly shortens the run-time. 
\subsection{Complexity estimation of the algorithm}
 Space complexity is determined by the number of extreme points, which depends on the shape of the credal set. For general lower previsions, even in low dimensions their number can be arbitrarily large; however, in the case of lower-upper probability pairs the upper bound for the number of extreme points is reported to be $s!$, where $s$ is equal to the number of elements of the probability space (see~\cite{WALLNER2007339}). Special classes of coherent lower-upper probability pairs have also been analyzed with the focus to their extreme points in \cite{miranda2003extreme, Miranda201544}. 

A more severe obstacle than space complexity is its time complexity. By far the slowest part of our algorithm is the calculation of the distance between extreme points (Algorithm~\ref{alg-fmd}, line \ref{alg-nd-call}) described in Algorithm~\ref{alg-nd}. Our complexity analysis will therefore focus on the number of calls to this routine. The time complexity of the routine is around $o(s^3)$, but since $s$ will typically be small compared to other variables, we may regard it as constant, and do the time complexity analysis based on other factors. The most important factor is certainly the number of extreme points, which we will denote by $v$; the number of constraints, which roughly coincides with the number of facets, will be denoted by $n$ and the dimension by $d$, typically $d=s-1$. The time complexity of the enumeration of extreme points of polyhedra is $O(n^2dv)$ (see \cite{Avis1992}), where $n$ is the number of constraints, $d$ the dimension, which is typically equal to $s-1$, and $v$ is the number of vertices.  

Typically, an extreme point is a solution of a system of $d$ equations forming constraints. This means that every extreme point is most usually adjacent to $d$ facets. The number of vertices per facet is then on average equal to $\frac{vd}{n}$, and since the distance to all vertices of faces adjacent to a vertex must be calculated, this gives $\frac{vd^2}{n}$ vertices. Vertices that were counted twice, because they lie on two facets, must be subtracted from this number. The distance must be calculated for every vertex, so this number must be multiplied by $v$. Depending on the ratio $\frac{v}{n}$, the number of pairs for which the quadratic programming routine \texttt{NormedDistance} must be called is bounded above by $d^2v^2/n$. In practice, the number of calls is significantly reduced by eliminating dominated extreme points. 

The time complexity is therefore exponential as a function of $s$ and polynomial as a function of $v$. Empirical testing shows that even for relatively low dimensional cases the algorithm's complexity is high. The complexity increases with the number of constraints, which is the number of gambles in $\mathcal K$. Notice however that with the size of $|\mathcal K|$ the accuracy of the approximation of the partially specified lower prevision increases. Therefore, the maximal possible distance, which is the maximal possible error of the approximation, is more important in the cases where the number of estimates is low; and in those cases the computational complexity of the algorithm is lower. 


\subsection{Numerical testing}
For numerical testing we implemented the algorithm on a sample of 10 randomly generated lower probabilities on probability spaces of sizes $s=|\states|=3, 4, 5, 6$ and 7. The constraints are formed by the lower probabilities of non-trivial subsets, whose number is $2^s-2$; the number of extreme points was in general close to the maximal possible number, which is $s!$. According to the complexity estimation from the previous section the upper bound for the number of calls without filtering the dominated extreme points would be 
\begin{equation}
	\frac{(s-1)^2(s!)^2}{2^s-2}
\end{equation}
plus the number of extreme points of non-dominated non-adjacent faces $\face_f$. The algorithm was designed to count the actual number of distances between extreme points that need to be considered (Table~\ref{table-results}, column 4), which slightly differs from the estimated value above. Further, Table~\ref{table-results} displays the average number of extreme points, which is also only slightly smaller than $s!$; the number of calls to the quadratic programming routine to calculate distances between pairs of extreme points; the number of distances that had to be considered (not all of them need actually be considered because the dominated ones can be quickly eliminated from the analysis); and the percentage of needed distances that actually had to be calculated using the quadratic programming routine. As expected, this percentage drops with the increase of dimension. 
\begin{table}[]
	\centering
	\begin{tabular}{c|rrrr}
		\toprule \\
		$|\states|$ & ext. pts & dist. calculated  & dist. needed  & ratio    \\\hline
		3         & 5.9                          & 11.8            & 11.8                                        & 100.00\% \\
		4         & 23.6                         & 124.2           & 300.4                                       & 41.34\%  \\
		5         & 101.2                        & 1697.2          & 6249.4                                      & 27.16\%  \\
		6         & 592.3                        & 31179.7         & 187453.2                                    & 16.63\%  \\
		7         & 2744.7                       & 586728.0          & 3911809.6                                   & 15.00\% \\
		\bottomrule
	\end{tabular}
	\caption{Test results by the sample space size: average number of extreme points; calls to quadratic programming routine; adjacent extreme points; ratio between the number of calls and the number of adjacent extreme points.}
	\label{table-results}
\end{table}

\section{Conclusions}\label{s-c}
The results obtained in this paper bound the maximal error of an approximation of a coherent lower prevision with its restriction to a finite set of gambles. This subsumes approximations of coherent lower previsions with more specific models, such as coherent lower probabilities. Such approximations are very common in the applications of imprecise probabilities, and our results give a first attempt to evaluate their errors. 

We have also provided an algorithm which calculates the bound based on calculating the normed distances between extreme points of a credal set. Since the number of extreme points grows rapidly with the dimension of the probability space, the computational complexity of the algorithm is in general high. 

The high computational complexity of the algorithm presents an obstacle that might hinder its practical applicability. Therefore, a direction of further research is finding more efficient algorithms based on the insights from the theoretical part of the paper. Another related question that could be examined with the help of the concepts developed in this paper is how to choose the most optimal set of gambles to approximate an unknown coherent lower prevision with minimal error. 

There are several applications of credal sets where approximations are used and could therefore benefit from the results of this paper. One of such applications are credal networks. In \cite{antonucci10a} approximations of credal sets with lower probabilities are proposed in the case of credal networks. Their findings suggest that as long as decisions based on credal networks are concerned, the use of lower probability approximations perform very well. This conclusion is based on numerical tests, where the decisions based on completely specified credal sets are compared with those based on the approximations with lower probabilities. The benefit of the use of our algorithm would be that the performance of lower probabilities could be estimated without having the credal sets completely specified. 

Another area where our results could be useful are imprecise Markov chains. One of the main problems of their estimation is that the complexity of credal sets corresponding to distributions of the chains grows exponentially in terms of the number of extreme points. In addition, current algorithms only allow the estimation of the corresponding lower expectations on single gambles. This means that the estimation for another gamble requires solving the optimization problem from the beginning. Since every estimation is computationally costly, which is especially true for the continuous time case, it is only feasible to do a certain number of estimations. Therefore, the question of the accuracy of the induced imprecise probability models arises naturally. 

\section*{Acknowledgement}
The author acknowledges the financial support from the Slovenian Research Agency (research core funding No. P5-0168).

\bibliography{../../references/references_all}

\begin{thebibliography}{}

\bibitem[\protect\citename{Antonucci \& Cuzzolin, }2010]{antonucci10a}
Antonucci, Alessandro, \& Cuzzolin, Fabio. 2010.
\newblock Credal Sets Approximation by Lower Probabilities: Application to
  Credal Networks.
\newblock {\em Pages  716--725 of:} H{\"u}llermeier, Eyke, Kruse, Rudolf, \&
  Hoffmann, Frank (eds), {\em Computational Intelligence for Knowledge-Based
  Systems Design}.
\newblock Berlin, Heidelberg: Springer Berlin Heidelberg.

\bibitem[\protect\citename{Augustin {\em et~al.},
  }2014]{augustin2014introduction}
Augustin, Thomas, Coolen, Frank~PA, de~Cooman, Gert, \& Troffaes, Matthias~CM.
  2014.
\newblock {\em Introduction to imprecise probabilities}.
\newblock John Wiley \& Sons.

\bibitem[\protect\citename{Avis \& Fukuda, }1992]{Avis1992}
Avis, David, \& Fukuda, Komei. 1992.
\newblock A pivoting algorithm for convex hulls and vertex enumeration of
  arrangements and polyhedra.
\newblock {\em Discrete {\&} Computational Geometry}, {\bf 8}(3), 295--313.

\bibitem[\protect\citename{Barber {\em et~al.},
  }1996]{Barber:1996:QAC:235815.235821}
Barber, C.~Bradford, Dobkin, David~P., \& Huhdanpaa, Hannu. 1996.
\newblock The Quickhull Algorithm for Convex Hulls.
\newblock {\em ACM Trans. Math. Softw.}, {\bf 22}(4), 469--483.

\bibitem[\protect\citename{Bronstein, }2008]{Bronstein2008}
Bronstein, E.~M. 2008.
\newblock Approximation of convex sets by polytopes.
\newblock {\em Journal of Mathematical Sciences}, {\bf 153}(6), 727--762.

\bibitem[\protect\citename{Chen {\em et~al.}, }1991]{chen1991line}
Chen, Pey-Chun, Hansen, Pierre, \& Jaumard, Brigitte. 1991.
\newblock On-line and off-line vertex enumeration by adjacency lists.
\newblock {\em Operations Research Letters}, {\bf 10}(7), 403--409.

\bibitem[\protect\citename{de~Cooman {\em et~al.}, }2009]{decooman-2008-a}
de~Cooman, Gert, Hermans, Filip, \& Quaeghebeur, Erik. 2009.
\newblock Imprecise {M}arkov chains and their limit behavior.
\newblock {\em Probability in the Engineering and Informational Sciences}, {\bf
  23}(4), 597--635.

\bibitem[\protect\citename{Dyer \& Proll, }1977]{dyer1977algorithm}
Dyer, Martin~E, \& Proll, Les~G. 1977.
\newblock An algorithm for determining all extreme points of a convex polytope.
\newblock {\em Mathematical Programming}, {\bf 12}(1), 81--96.

\bibitem[\protect\citename{Dyer \& Proll, }1982]{DYER1982359}
Dyer, M.E., \& Proll, L.G. 1982.
\newblock An improved vertex enumeration algorithm.
\newblock {\em European Journal of Operational Research}, {\bf 9}(4), 359 --
  368.

\bibitem[\protect\citename{Gruber, }2007]{gruber:07CDG}
Gruber, Peter. 2007.
\newblock {\em Convex and Discrete Geometry}.
\newblock Springer-Verlag Berlin Heidelberg.

\bibitem[\protect\citename{Matheiss \& Rubin, }1980]{matheiss1980survey}
Matheiss, TH, \& Rubin, David~S. 1980.
\newblock A survey and comparison of methods for finding all vertices of convex
  polyhedral sets.
\newblock {\em Mathematics of operations research}, {\bf 5}(2), 167--185.

\bibitem[\protect\citename{Miranda, }2008]{miranda2008}
Miranda, Enrique. 2008.
\newblock A survey of the theory of coherent lower previsions.
\newblock {\em International Journal of Approximate Reasoning}, {\bf 48}(2),
  628 -- 658.
\newblock In Memory of Philippe Smets (1938-2005).

\bibitem[\protect\citename{Miranda \& Destercke, }2015]{Miranda201544}
Miranda, Enrique, \& Destercke, Sébastien. 2015.
\newblock Extreme points of the credal sets generated by comparative
  probabilities.
\newblock {\em Journal of Mathematical Psychology}, {\bf 64–-65}, 44 -- 57.

\bibitem[\protect\citename{Miranda {\em et~al.}, }2003]{miranda2003extreme}
Miranda, Enrique, Couso, In{\'e}s, \& Gil, Pedro. 2003.
\newblock Extreme points of credal sets generated by 2-alternating capacities.
\newblock {\em International Journal of Approximate Reasoning}, {\bf 33}(1),
  95--115.

\bibitem[\protect\citename{Troffaes \& De~Cooman, }2014]{TroffaesDeCooman2014}
Troffaes, Matthias~CM, \& De~Cooman, Gert. 2014.
\newblock {\em Lower previsions}.
\newblock John Wiley \& Sons.

\bibitem[\protect\citename{\v{S}kulj, }2009]{skulj:09IJAR}
\v{S}kulj, Damjan. 2009.
\newblock Discrete time {M}arkov chains with interval probabilities.
\newblock {\em International Journal of Approximate Reasoning}, {\bf 50}(8),
  1314--1329.

\bibitem[\protect\citename{\v{S}kulj, }2015]{skulj:15AMC}
\v{S}kulj, Damjan. 2015.
\newblock Efficient computation of the bounds of continuous time imprecise
  {M}arkov chains.
\newblock {\em Applied Mathematics and Computation}, {\bf 250}(0), 165 -- 180.

\bibitem[\protect\citename{\v{S}kulj \& Hable, }2013]{skulj:hable:13MET}
\v{S}kulj, Damjan, \& Hable, Robert. 2013.
\newblock Coefficients of ergodicity for {M}arkov chains with uncertain
  parameters.
\newblock {\em Metrika}, {\bf 76}(1), 107--133.

\bibitem[\protect\citename{Wallner, }2007]{WALLNER2007339}
Wallner, Anton. 2007.
\newblock Extreme points of coherent probabilities in finite spaces.
\newblock {\em International Journal of Approximate Reasoning}, {\bf 44}(3),
  339 -- 357.

\end{thebibliography}
\end{document}